\DeclareMathOperator{\Real}{Re}
\DeclareMathOperator{\Imm}{Im}
\newcommand{\R}{{\mathds R}}
\newcommand{\Z}{{\mathds Z}}
\newcommand{\z}{{\mathbb{R}^N}}
\newcommand{\C}{{\mathds C}}
\newcommand{\E}{\mathcal{E}}
\newcommand{\A}{\mathfrak{a}}
\newcommand{\cL}{\mathcal{L}}
\newtheorem{definition}{Definition}[section]
\newtheorem{lemma}[definition]{Lemma}
\newtheorem{theorem}[definition]{Theorem}
\newtheorem{proposition}[definition]{Proposition}
\newtheorem{remark}[definition]{Remark}
\newtheorem{corollary}[definition]{Corollary}
\title[Fourth-order Schr\"odinger type operator]{Fourth-order Schr\"odinger type operator with singular potentials}
\author[F. Gregorio]{Federica Gregorio}
\address{Dipartimento di Fisica, Universit\`a degli Studi di Salerno, Via Ponte Don Melillo, 84084 FISCIANO (Sa), Italy.}
\email{fgregorio@unisa.it}
\author[S. Mildner]{Sebastian Mildner}
\address{Technische Universit\"at Dresden,
Institut f\"ur Analysis,
01062 Dresden,
Germany}
\email{sebastian.mildner@tu-dresden.de}
\thanks{The first author is member of the Gruppo Nazionale per l'Analisi Matematica,
la Probabilit\`a e le loro Applicazioni (GNAMPA) of the Istituto Nazionale di Alta Matematica
(INdAM)}
\subjclass[2010]{47D06, 35J10, 42B20, 47F05}
\keywords{Singular potentials, holomorphic $C_0$-semigroup, biharmonic operator, Rellich inequality, sesquilinear forms, off-diagonal estimates, Riesz transforms}
\date{}
\begin{document}

\date{}
 
\begin{abstract}
In this paper, we study the biharmonic operator perturbed by an inverse fourth-order potential. In particular, we consider the operator $A=\Delta^2-c\lvert x\rvert^{-4}$ where $c$ is any constant such that $c<\bigl(\frac{N(N-4)}{4}\bigr)^2$. The semigroup generated by $-A$ in $L^2(\z)$, $N\geq5$, extrapolates to a bounded holomorphic $C_0$-semigroup on $L^p(\z)$ for $p\in [p'_0,p_0]$ where $p_0=\frac{2N}{N-4}$ and $p'_0$ is its dual exponent.  Moreover, we obtain the boundedness of the Riesz transform $\Delta A^{-1/2}$ on $L^p(\z)$ for all $p\in(p'_0,2]$.
\end{abstract}

\maketitle
\section{Introduction}

Let us consider the biharmonic operator
\[A_0=\Delta^2.\]
In this paper, we want to study the perturbation of $A_0$ with the singular potential $V(x)=c\lvert x\rvert^{-4}$ a.e.\ where $c<C^*:=\bigl(\frac{N(N-4)}{4}\bigr)^2$. More precisely, we consider the operator
\[A=A_0-V=\Delta^2-\frac{c}{\lvert x\rvert^4}.\]

The biharmonic operator $A_0$ is included in a class of higher order elliptic operators studied by Davies in \cite{DAVIES}. In particular, he proves that for $N<4$, $(e^{-tA_0})_{t\geq 0}$ induces a bounded $C_0$-semigroup on $L^p(\z)$ for all $1\leq p<\infty$ and that Gaussian-type estimates for the heat kernel hold.
Denoting by $K$ the heat kernel associated to the operator $A_0$, he proves that there exist $c_1, c_2, k>0$ such that
\[\lvert K(t,x,y)\rvert
\leq c_1 t^{-N/4}e^{-c_2\frac{\lvert x-y\rvert^{4/3}}{t^{1/3}}+kt}\]
for all $t>0$ and $x,y\in\z$.

The result is different for $N>4$. In this case he proves that the semigroup $(e^{-tA_0})_{t\geq 0}$ extends to a bounded holomorphic $C_0$-semigroup on $L^p(\z)$ for all $p\in[p'_0,p_0]$, where $p_0=\frac{2N}{N-4}$ and $p'_0$ is its dual exponent. An analogous situation holds when replacing $A_0$ by $A$,
which was remarked for example in \cite[Section 6]{LSV} by Liskevich, Sobol and Vogt (see also Remark \ref{rem:LSV} and Proposition \ref{prop:LSV} below).

We can define the Riesz transform associated to $A$ by
\[\Delta A^{-1/2}
:=\frac{1}{\Gamma(1/2)}\int_0^\infty t^{-1/2}\Delta e^{-tA}\,dt.\]
The boundedness of the Riesz transform on $L^p(\z)$ implies that the domain of $A^{1/2}$ is included in the Sobolev space $W^{2,p}(\z)$. Thus, we obtain $W^{2,p}$-regularity of the solution to the evolution equation with initial datum in $L^p(\z)$. The boundedness of the Riesz transforms for Schr\"odinger operators has widely been studied in harmonic analysis. Several authors have generalized the results for elliptic operators $L$ of order $2m$ or for Riemannian manifolds, see for example \cite{AT}, \cite{AO}, \cite{BK} and the references therein. Blunck and Kunstmann in \cite{BK} apply the Calder\'on-Zygmund theory for non-integral operators to obtain estimates on $\Delta L^{-1/2}$ since, in general, operators of order $2m$ do not satisfy Gaussian bounds if $2m<N$. More precisely, they prove an abstract criterion for estimates of the type
\[\lVert B L^{-\alpha}f\rVert_{L^p(\Omega)}
\leq C_p\lVert f\rVert_{L^p(\Omega)},
\qquad p\in(q_0,2], \]
where $B, L$ are linear operators, $\alpha\in[0,1)$, $q_0\in[1,2)$ and $\Omega$ is a measure space. We will apply this criterion (Theorem \ref{thm:blunckkust} below) to our situation $(B,L,\Omega)=(\Delta, A, \z)$ with $q_0=p'_0$.

We will treat the operator $A=\Delta^2-V$
in $L^2(\z)$ as the associated operator to the form
\[
\A(u,v)
=(\Delta u,\Delta v)_2-(Vu,v)_2
\]
with $D(\A)=\{u\in H^2(\z): \lVert\lvert V\rvert^{1/2}u\rVert_2<\infty\}$. As a consequence of the Rellich inequality, 
\begin{equation}\label{eq:hardyin}
\Bigl(\frac{N(N-4)}{4}\Bigr)^2\int_\z\frac{\lvert u(x)\rvert^2}{\lvert x\rvert^4}\,dx
\leq\int_\z\lvert\Delta u(x)\rvert^2\,dx
\end{equation}
for all $u\in H^2(\z)$ with $N\geq 5$ (cf.\ \cite{TZ}), one obtains $D(\A)=H^2(\z)$ and
\begin{equation}\label{eq:hardyalpha}
\A(u):=\int_\z\lvert\Delta u(x)\rvert^2\,dx-\int_\z V(x)\lvert u(x)\rvert^2\,dx
\geq\eta\int_\z\lvert\Delta u(x)\rvert^2\,dx
\end{equation}
for some $\eta\in(0,1)$, i.e., $\A$ is densely defined and positive semi-definite. Moreover, thanks to \eqref{eq:hardyin} and \eqref{eq:hardyalpha}, the norms $\lVert{}\cdot{}\rVert_{\A}$ and $\lVert{}\cdot{}\rVert_{H^2}$ are equivalent, therefore $(D(\A),\lVert{}\cdot{}\rVert_{\A})$ is complete, i.e., $\A$ is closed. 
Consequently, see for example \cite[Theorem 1.54]{EMO}, $-A$ is the generator of a $C_0$-semigroup $(e^{-tA})_{t\geq 0}$ on $L^2(\z)$
that is contractive and holomorphic on the sector $\Sigma(\pi/2)$.

In the following, making use of multiplication operators and off-diagonal estimates, we prove that, for $N\geq5$, the semigroup $(e^{-tA})_{t\geq 0}$ extrapolates to a bounded holomorphic $C_0$-semigroup on $L^p(\z)$ for all $p\in[p'_0,p_0]$ and that the Riesz transform associated to $A$ is bounded on $L^p(\z)$ for all $p\in(p'_0,2]$.

\medskip
\emph{Notation.} Throughout the paper, we assume $N\geq 5$ and denote by $\lVert{}\cdot{}\rVert_{p\to q}$ the norm of operators acting from $L^p(\z)$ into $L^q(\z)$ and $p'$ is the dual exponent of $p$, $p'=\frac{p}{p-1}$. Further, we set $\Sigma(\theta):=\{z\in\C\setminus\{0\}:\lvert\arg z\rvert<\theta\}$ for $\theta\in(0,\pi/2]$. Finally, $\chi_E$ denotes the characteristic function of a set $E$ and $B(x,r)$ the ball around $x$ of radius $r$.

\section{The twisted semigroup}

In order to show the boundedness of the Riesz transform and obtain off-diagonal estimates for the semigroup generated by $-A$ we use the classical Davies perturbation technique, and estimate the twisted semigroup. Therefore, denoting by $\alpha$ a multi-index with $\lvert\alpha\rvert=\alpha_1+\dots+\alpha_N$ and $D^\alpha$ the corresponding partial differential operator on $C^\infty(\z)$, we define $\E:=\{\phi\in C^\infty(\z;\R)\textrm{ bounded}:\lvert D^\alpha\phi\rvert\leq1\ \textrm{for all}\ 1\leq\lvert\alpha\rvert\leq 2\}$ and the twisted forms
\[\A_{\lambda\phi}(u,v):=\A(e^{-\lambda\phi}u,e^{\lambda\phi}v)\]
with $D(\A_{\lambda\phi})=H^2(\z)$, $\lambda\in\R$ and $\phi\in\E$.
A simple computation shows that
\[A_{\lambda\phi}:=e^{\lambda\phi}Ae^{-\lambda\phi}\]
with $D(A_{\lambda\phi})=\{u\in L^2(\z): e^{-\lambda\phi}u\in D(A)\}$ is the associated operator to the form $\A_{\lambda\phi}$.
Moreover, there exist $0<\gamma<1$ and $k>1$ such that the inequality
\begin{equation}\label{eq:forme}
\lvert\A_{\lambda\phi}(u)-\A(u)\rvert
\leq\gamma \A(u)+k(1+\lambda^4)\lVert u\rVert_2^2
\end{equation}
holds for all $u\in H^2(\z)$, $\lambda\in\R$ and $\phi\in\E$. Indeed,
we have
\begin{align*}
\A_{\lambda\phi}(u)
&= \A(u)+\lambda^4\int_\z\lvert\nabla\phi\rvert^4\lvert u\rvert^2\,dx
-\lambda^2\int_\z\lvert\Delta\phi\rvert^2\lvert u\rvert^2\,dx\\
&\quad
+4\lambda^3 i \Imm\int_\z\lvert\nabla\phi\rvert^2\nabla\phi\cdot\nabla\overline{u}\,u\,dx
+2\lambda^2\Real\int_\z\lvert\nabla \phi\rvert^2u\Delta \overline{u}\,dx\\
&\quad
-4\lambda^2\Real\int_\z\Delta\phi\,\nabla\phi\cdot\nabla\overline{u}\,u\,dx
+2\lambda i \Imm\int_\z\Delta\phi\,\overline{u}\Delta u\,dx\\
&\quad
-4\lambda^2\int_\z\lvert\nabla\phi\cdot\nabla u\rvert^2\,dx
+4\lambda i \Imm\int_\z\nabla\phi\cdot\nabla \overline{u}\Delta u\,dx.
\end{align*}
Now, the application of \eqref{eq:hardyalpha}, the Gagliardo-Nirenberg inequality
\[
 \lVert\nabla u\rVert_2^2
\leq\lVert u\rVert_2\lVert\Delta u\rVert_2,
\qquad u\in H^2(\z)
\]
and Young's inequality yields for $0<\varepsilon<1$
\begin{align*}
\lvert\A_{\lambda\phi}(u)-\A(u)\rvert
&\leq N^2(\lambda^4+\lambda^2)\lVert u\rVert_2^2
\\
&\quad+4(N\lambda^2\lVert u\rVert_2)
(N^{1/2}\lvert\lambda\rvert\lVert\nabla u\rVert_2)
+2(N\lambda^2\varepsilon^{-1}\lVert u\rVert_2)
(\varepsilon\lVert\Delta u\rVert_2)
\\
&\quad+4(N\lvert\lambda\rvert\lVert u\rVert_2)
(N^{1/2}\lvert\lambda\rvert\lVert\nabla u\rVert_2)
+2(N\lvert\lambda\rvert\varepsilon^{-1}\lVert u\rVert_2)
(\varepsilon\lVert\Delta u\rVert_2)
\\
&\quad+4N\lambda^2\lVert\nabla u\rVert_2^2
+4(N^{1/2}\lvert\lambda\rvert\varepsilon^{-1}\lVert\nabla u\rVert_2)(\varepsilon\lVert\Delta u\rVert_2)
\\
&\leq 4\varepsilon^2\lVert\Delta u\rVert_2^2
+4N^2(\lambda^2+\lambda^4)\varepsilon^{-2}\lVert u\rVert_2^2
+10N\lambda^2\varepsilon^{-2}\lVert\nabla u\rVert_2^2
\\
&\leq 4\varepsilon^2\lVert\Delta u\rVert_2^2
+4N^2(\lambda^2+\lambda^4)\varepsilon^{-2}\lVert u\rVert_2^2
\\
&\quad+10(N\lambda^2\varepsilon^{-3}\lVert u\rVert_2)
(\varepsilon\lVert\Delta u\rVert_2)
\\
&\leq 9\varepsilon^2\lVert\Delta u\rVert_2^2
+9N^2(\lambda^2+\lambda^4)\varepsilon^{-6}\lVert u\rVert_2^2
\\
&\leq (9\varepsilon^2/\eta)\A(u)
+18N^2(1+\lambda^4)\varepsilon^{-6}\lVert u\rVert_2^2.
\end{align*}

For the rest of this article, we fix $\gamma$ and $k$ such that inequality \eqref{eq:forme} holds. Then the forms $\A_{\lambda\phi}+2k(1+\lambda^4)$ are closed and uniformly sectorial (see for example \cite[Theorem 1.19]{EMO}). Thus the operators $-A_{\lambda\phi}-2k(1+\lambda^4)$ generate contractive holomorphic $C_0$-semigroups on $L^2(\z)$ with a common sector of holomorphy $\Sigma(\Theta)$.
Therewith, we can show the following lemma.

\begin{lemma}\label{lem:ssd}
 \textup{(a)}
For all $z\in\Sigma(\Theta)$, $\lambda\in\R$ and $\phi\in\E$ the following inequality holds
\begin{equation}\label{eq:22esttw}
 \lVert e^{-zA_{\lambda\phi}}\rVert_{2\to2}\leq e^{2k(1+\lambda^4)\Real z}.
\end{equation}

 \textup{(b)}
There exists $M_\Theta>0$ such that
\begin{equation}\label{eq:twistedlapl}
\lVert \Delta e^{-zA_{\lambda\phi}}\rVert_{2\to2}
\leq M_\Theta \lvert z\rvert^{-1/2}e^{2k(1+\lambda^4)\Real z}
\end{equation}
holds for all  $z\in\Sigma(\Theta/2)$, $\lambda\in\R$ and $\phi\in\E$.
\end{lemma}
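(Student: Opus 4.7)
For part (a), the paragraph preceding the lemma already provides the key fact: the shifted form $\A_{\lambda\phi}+2k(1+\lambda^4)$ is closed and uniformly sectorial with common sector of holomorphy $\Sigma(\Theta)$, so $-B_\lambda$ with $B_\lambda:=A_{\lambda\phi}+2k(1+\lambda^4)$ generates a contractive holomorphic $C_0$-semigroup on $\Sigma(\Theta)$. Writing
\[
e^{-zA_{\lambda\phi}}=e^{2k(1+\lambda^4)z}\,e^{-zB_\lambda}
\]
and combining $|e^{2k(1+\lambda^4)z}|=e^{2k(1+\lambda^4)\Real z}$ with $\lVert e^{-zB_\lambda}\rVert_{2\to 2}\leq 1$ yields \eqref{eq:22esttw} immediately.

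For part (b), the same factorization reduces the task to proving $\lVert\Delta e^{-zB_\lambda}\rVert_{2\to 2}\leq M_\Theta|z|^{-1/2}$ on $\Sigma(\Theta/2)$, uniformly in $\lambda\in\R$ and $\phi\in\E$. The crucial algebraic step is to control $\lVert\Delta u\rVert_2^2$ by $\Real(B_\lambda u,u)_2$ with no residual $\lambda$-dependent remainder. From \eqref{eq:hardyalpha} one has $\lVert\Delta u\rVert_2^2\leq\eta^{-1}\A(u)$; the form inequality \eqref{eq:forme} gives $(1-\gamma)\A(u)\leq\Real\A_{\lambda\phi}(u)+k(1+\lambda^4)\lVert u\rVert_2^2$; and for $u\in D(B_\lambda)$ one has $\Real\A_{\lambda\phi}(u)=\Real(B_\lambda u,u)_2-2k(1+\lambda^4)\lVert u\rVert_2^2$. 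Chaining these three relations, the $(1+\lambda^4)\lVert u\rVert_2^2$ contributions cancel precisely because the shift was chosen twice as large as the constant appearing in \eqref{eq:forme}, leaving
\[
\lVert\Delta u\rVert_2^2\leq\frac{1}{\eta(1-\gamma)}\Real(B_\lambda u,u)_2,\qquad u\in D(B_\lambda).
\]

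Applying this to $w(z):=e^{-zB_\lambda}f$, which lies in $D(B_\lambda)$ for every $z$ in the open sector, and using Cauchy--Schwarz yields $\lVert\Delta w(z)\rVert_2^2\leq C\lVert B_\lambda w(z)\rVert_2\,\lVert w(z)\rVert_2$. The standard holomorphic-semigroup derivative bound $\lVert B_\lambda e^{-zB_\lambda}\rVert_{2\to 2}\leq C_\Theta|z|^{-1}$ on $\Sigma(\Theta/2)$ follows from Cauchy's integral formula applied on a circle of radius comparable to $|z|$ that remains inside $\Sigma(\Theta)$, the constant depending only on $\Theta$ thanks to the common sector; combined with the contraction $\lVert w(z)\rVert_2\leq\lVert f\rVert_2$ this gives $\lVert\Delta w(z)\rVert_2\leq M_\Theta|z|^{-1/2}\lVert f\rVert_2$. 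Reinstating the factor $e^{2k(1+\lambda^4)z}$ produces \eqref{eq:twistedlapl}. The main obstacle I anticipate is exactly the cancellation in the second paragraph: were the shift smaller than $2k(1+\lambda^4)$, an extra $(1+\lambda^4)^{1/2}\lVert f\rVert_2$ summand would survive in $\lVert\Delta w(z)\rVert_2$, which is incompatible with the clean form $|z|^{-1/2}e^{2k(1+\lambda^4)\Real z}$ of the target estimate.
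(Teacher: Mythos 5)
Your proof is correct and follows essentially the same route as the paper: contractivity of the shifted semigroup for (a), and for (b) the Cauchy-formula derivative bound combined with the chain $(1-\gamma)\eta\lVert\Delta v\rVert_2^2\leq\Real(\A_{\lambda\phi}(v)+2k(1+\lambda^4)\lVert v\rVert_2^2)\leq\lVert B_\lambda v\rVert_2\lVert v\rVert_2$ applied to $v=e^{-zB_\lambda}u$. (One cosmetic remark: the $\lambda$-terms do not cancel \emph{exactly} -- a shift of $k(1+\lambda^4)$ would already suffice, and the shift $2k(1+\lambda^4)$ leaves a harmless negative remainder -- but this does not affect the argument.)
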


\begin{proof}
Let $\lambda\in\R$ and $\phi\in\E$.
As mentioned before, we have
\begin{equation}\label{eq:chsg1}
\lVert e^{-z(A_{\lambda\phi}+2k(1+\lambda^4))}\rVert_{2\to 2}
\leq 1,
\end{equation}
for all $z\in\Sigma(\Theta)$,
which implies \eqref{eq:22esttw}. Moreover, by the Cauchy formula,
\begin{equation}\label{eq:chsg2}
\lVert (A_{\lambda\phi}+2k(1+\lambda^4)) e^{-z(A_{\lambda\phi}+2k(1+\lambda^4))}\rVert_{2\to 2}
\leq (\lvert z\rvert\sin(\Theta/4))^{-1}
\end{equation}
holds for all $z\in\Sigma(\Theta/2)$.
Further, \eqref{eq:hardyalpha} and \eqref{eq:forme} yield
\begin{align*}
 (1-\gamma)\eta\lVert\Delta v\rVert_2^2
&\leq (1-\gamma)\A(v)
\leq \Real(\A_{\lambda\phi}(v)+2k(1+\lambda^4)\lVert v\rVert_2^2)\\
&\leq\lVert (A_{\lambda\phi}+2k(1+\lambda^4))v\rVert_2\lVert v\rVert_2
\end{align*}
for all $v\in D(A_{\lambda\phi})$.
Taking $v=e^{-z(A_{\lambda\phi}+2k(1+\lambda^4))}u$ and applying the estimates \eqref{eq:chsg1} and \eqref{eq:chsg2}, we conclude \eqref{eq:twistedlapl} with $M_\Theta=1/\sqrt{(1-\gamma)\eta\sin(\Theta/4)}$.
\end{proof}

Finally, we prove $L^p-L^q$ estimates for the twisted semigroups.

\begin{lemma}\label{lem:pdest}
Let $p_0'\leq p\leq 2\leq q\leq p_0$. Then there exists $M_{pq}>0$ such that
\[
\lVert e^{-zA_{\lambda\phi}} u\rVert_q
\leq M_{pq}\lvert z\rvert^{-\frac{N}{4}(\frac{1}{p}-\frac{1}{q})}e^{2k(1+\lambda^4)\Real z}\lVert u\rVert_p
\]
holds for all $z\in\Sigma(\Theta/2)$, $\lambda\in\R$, $\phi\in\E$ and $u\in L^2(\z)\cap L^p(\z)$.
\end{lemma}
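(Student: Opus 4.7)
The strategy is to promote the $L^2$ bounds of Lemma \ref{lem:ssd} to $L^p$--$L^q$ estimates by combining Sobolev embedding at the critical exponent $p_0$, self-adjointness duality, Riesz--Thorin interpolation, and the semigroup factorization $e^{-zA_{\lambda\phi}} = e^{-(z/2)A_{\lambda\phi}} \circ e^{-(z/2)A_{\lambda\phi}}$.

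First I would establish the endpoint estimate $L^2 \to L^{p_0}$. Since $N\geq 5$, the homogeneous Sobolev inequality $\lVert u\rVert_{p_0}\leq C\lVert\Delta u\rVert_2$ holds for all $u\in H^2(\z)$, with $p_0=2N/(N-4)$ the critical second-order Sobolev exponent. Applying this to $u=e^{-zA_{\lambda\phi}}f$ and inserting \eqref{eq:twistedlapl} gives
\[
\lVert e^{-zA_{\lambda\phi}}f\rVert_{p_0}
\leq C\lVert\Delta e^{-zA_{\lambda\phi}}f\rVert_2
\leq CM_\Theta\lvert z\rvert^{-1/2}e^{2k(1+\lambda^4)\Real z}\lVert f\rVert_2,
\]
which matches the claim because $\tfrac{N}{4}(\tfrac{1}{2}-\tfrac{1}{p_0})=\tfrac{1}{2}$. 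For the dual endpoint $L^{p_0'}\to L^2$, I would use that $\phi$ is real and $A$ is self-adjoint on $L^2(\z)$, so that $A_{\lambda\phi}^*=A_{-\lambda\phi}$ and hence $(e^{-zA_{\lambda\phi}})^*=e^{-\bar z A_{-\lambda\phi}}$. The previous estimate applied to $A_{-\lambda\phi}$ then dualizes to
\[
\lVert e^{-zA_{\lambda\phi}}f\rVert_2
\leq CM_\Theta\lvert z\rvert^{-1/2}e^{2k(1+\lambda^4)\Real z}\lVert f\rVert_{p_0'},
\]
using $\Real\bar z=\Real z$ and the stability of the sector $\Sigma(\Theta/2)$ under conjugation.

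Riesz--Thorin interpolation of each of these two endpoints against the $L^2$-contraction \eqref{eq:22esttw} yields
\[
\lVert e^{-zA_{\lambda\phi}}\rVert_{2\to q}\leq C_q\lvert z\rvert^{-\frac{N}{4}(\frac{1}{2}-\frac{1}{q})}e^{2k(1+\lambda^4)\Real z},
\qquad q\in[2,p_0],
\]
and the symmetric bound $\lVert e^{-zA_{\lambda\phi}}\rVert_{p\to 2}\leq C_p\lvert z\rvert^{-\frac{N}{4}(\frac{1}{p}-\frac{1}{2})}e^{2k(1+\lambda^4)\Real z}$ for $p\in[p_0',2]$, with constants independent of $\lambda$ and $\phi$. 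The full claim then follows by composing these two through $L^2$: writing $e^{-zA_{\lambda\phi}}=e^{-(z/2)A_{\lambda\phi}}\circ e^{-(z/2)A_{\lambda\phi}}$ as $L^p\to L^2\to L^q$, the two powers of $\lvert z/2\rvert$ add to give the claimed exponent $-\tfrac{N}{4}(\tfrac{1}{p}-\tfrac{1}{q})$ (up to a harmless constant absorbed into $M_{pq}$), and the two exponential factors combine to $e^{2k(1+\lambda^4)\Real z}$ since $\Real(z/2)+\Real(z/2)=\Real z$. The main bookkeeping point to keep straight is that the $\lambda$-dependence is carried entirely by this exponential factor at every step — duality, interpolation, and composition — which is automatic from the uniform control already provided by Lemma \ref{lem:ssd}.
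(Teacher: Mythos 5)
Your proposal is correct and follows essentially the same route as the paper: Sobolev embedding combined with Lemma \ref{lem:ssd} for the $L^2\to L^{p_0}$ endpoint, duality (via $A_{\lambda\phi}^*=A_{-\lambda\phi}$), Riesz--Thorin interpolation against \eqref{eq:22esttw}, and composition through $L^2$ by the semigroup property. The only difference is the order in which you dualize and interpolate, which is immaterial.
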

\begin{proof}
Let $z\in\Sigma(\Theta/2)$, $\lambda\in\R$ and $\phi\in\E$. Then, by Sobolev's embedding theorem (cf. \cite[Theorem 4.31]{ADAMS}) and Lemma \ref{lem:ssd}, one obtains
\begin{equation}\label{eq:2p0est}
\lVert e^{-zA_{\lambda\phi}}u\rVert_{\frac{2N}{N-4}}
\leq C_{\mathrm{S}}\lVert \Delta e^{-zA_{\lambda\phi}}u\rVert_2
\leq C_{\mathrm{S}}M_\Theta\lvert z\rvert^{-1/2}e^{2k(1+\lambda^4)\Real z}\lVert u\rVert_2
\end{equation}
for all $u\in L^2(\z)$.
Applying the Riesz-Thorin interpolation theorem to $e^{-zA_{\lambda\phi}}$ with respect to the bounds \eqref{eq:22esttw} and \eqref{eq:2p0est},
we achieve the $L^2-L^q$ estimate 
\[
\lVert e^{-zA_{\lambda\phi}}\rVert_{2\to q}
\leq M_{2q}\lvert z\rvert^{-\frac{N}{4}(\frac{1}{2}-\frac{1}{q})}e^{2k(1+\lambda^4)\Real z}
\]
with $M_{2q}=(C_\mathrm{S}M_\Theta)^{\frac{N}{2}(\frac{1}{2}-\frac{1}{q})}$. Then a duality argument yields the $L^p-L^2$ estimate.
Finally, we only have to combine these two and use the semigroup property to conclude the $L^p-L^q$ estimate with $M_{pq}=(2C_\mathrm{S}M_\Theta)^{\frac{N}{2}(\frac{1}{p}-\frac{1}{q})}$.
\end{proof}

\section{Off-diagonal estimates}

In this section, we study off-diagonal estimates, which enable us to obtain the extrapolation of the semigroup $(e^{-tA})_{t\geq 0}$ and the boundedness of the Riesz transform $\Delta A^{-1/2}$.

We say that a family $(T(z))_{z\in\Sigma(\theta)}$, $\theta\in(0,\pi/2]$, of bounded linear operators on $L^2(\z)$ \emph{satisfies $L^p-L^q$ off-diagonal estimates} for $1\leq p\leq q\leq \infty$ if there exist $c_1,c_2>0$ such that for each convex, compact subsets $E,F$ of $\z$, for each $u\in L^2(\z)\cap L^p(\z)$ supported in $E$ and for all $z\in\Sigma(\theta)$, such an inequality holds
\[
\lVert T(z)u\rVert_{L^q(F)}
\leq c_1\lvert z\rvert^{-\gamma_{pq}}\exp\Bigl(-c_2\frac{d(E,F)^{4/3}}{\lvert z\rvert^{1/3}}\Bigr)\lVert u\rVert_p,
\]
where $\gamma_{pq}=\frac{N}{4}\bigl(\frac{1}{p}-\frac{1}{q}\bigr)$ and
\[
d(E,F)=\sup_{\phi\in\E}[\inf\{\phi(x)-\phi(y):x\in E, y\in F\}].
\]

Davies proved that this distance is equivalent to the Euclidean one if the sets $E$ and $F$ are additionally disjoint, \cite[Lemma 4]{DAVIES}. We recall this result.
\begin{lemma}\label{lem:equiv}
If $E$ and $F$ are disjoint, convex, compact subsets of $\z$, then
\[d_e(E,F)\leq d(E,F)\leq N^{1/2}d_e(E,F),\]
where $d_e(E,F)$ is the Euclidean distance between $E$ and $F$.
\end{lemma}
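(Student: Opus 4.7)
The plan is to prove the two inequalities separately. The right-hand inequality $d(E,F)\le N^{1/2}d_e(E,F)$ follows quickly from the derivative bounds that define $\E$, while the left-hand inequality requires constructing an explicit competitor $\phi\in\E$ adapted to a direction that separates $E$ from $F$.

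For the upper bound I would fix any $\phi\in\E$; since $|\partial_i\phi|\le 1$ for every $i$, one has $|\nabla\phi(z)|\le N^{1/2}$ pointwise, so $\phi$ is $N^{1/2}$-Lipschitz on $\z$. Picking a pair of closest points $x_0\in E$, $y_0\in F$, which exists by compactness, gives
\[
\inf_{x\in E,\,y\in F}\bigl(\phi(x)-\phi(y)\bigr)\le \phi(x_0)-\phi(y_0)\le N^{1/2}|x_0-y_0|=N^{1/2}d_e(E,F),
\]
and taking the supremum over $\phi\in\E$ yields $d(E,F)\le N^{1/2}d_e(E,F)$.

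For the lower bound, set $h:=d_e(E,F)>0$, fix closest points $x_0\in E$, $y_0\in F$, and define $v:=(x_0-y_0)/h$, $a:=x_0\cdot v$, $b:=y_0\cdot v$, so that $|v|=1$ and $a-b=h$. The convexity of $E$ together with the first-order optimality of $x_0$ as the nearest point in $E$ to $y_0$ forces $(x-x_0)\cdot v\ge 0$ for every $x\in E$, hence $x\cdot v\ge a$; the symmetric argument applied to $F$ and $y_0$ gives $y\cdot v\le b$ for every $y\in F$. I would then take $\phi(x):=\psi(x\cdot v)$ for a bounded, non-decreasing $\psi\in C^\infty(\R)$ whose derivative $\psi'$ is a plateau function identically $1$ on $[b,a]$, vanishing outside a slightly larger interval, with $0\le\psi'\le 1$ and $|\psi''|\le 1$. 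Since $|v_i|\le 1$ for each $i$, the chain rule gives $|\partial_i\phi|\le|\psi'|\le 1$ and $|\partial_i\partial_j\phi|\le|\psi''|\le 1$, so $\phi\in\E$, and the monotonicity of $\psi$ combined with $x\cdot v\ge a$, $y\cdot v\le b$ yields
\[
\phi(x)-\phi(y)\ge\psi(a)-\psi(b)=\int_b^a\psi'(s)\,ds=a-b=h,
\]
hence $d(E,F)\ge h$ after taking the infimum over $E\times F$ and the supremum over $\phi$.

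I expect the main technical point to be producing the plateau derivative $\psi'$ subject to all three constraints simultaneously: value $1$ on the entire interval $[b,a]$, $C^\infty$ regularity, and the curvature bound $|\psi''|\le 1$. This is achieved by fixing a smooth bump $\chi\ge 0$ satisfying $\chi\equiv 1$ on $[b,a]$, $\operatorname{supp}\chi\subset[b-2,a+2]$, and $|\chi'|\le 1$, which is possible because each transition interval $[b-2,b]$ and $[a,a+2]$ has length $2\ge 1$, and then setting $\psi(t):=\int_{-\infty}^t\chi(s)\,ds$; boundedness of $\psi$ is then immediate from the compact support of $\chi$.
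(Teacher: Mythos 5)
Your proof is correct. Note that the paper itself does not prove this lemma at all: it simply recalls it as \cite[Lemma~4]{DAVIES}, so you are supplying a self-contained argument where the authors rely on a citation. Your two halves are the standard ones and they check out: the upper bound follows from the pointwise bound $\lvert\nabla\phi\rvert\le N^{1/2}$ forced by $\lvert\partial_i\phi\rvert\le 1$, and the lower bound from the competitor $\phi(x)=\psi(x\cdot v)$, where the variational inequality for nearest points of convex sets correctly gives $x\cdot v\ge a$ on $E$ and $y\cdot v\le b$ on $F$, and the chain rule gives $\lvert D^\alpha\phi\rvert\le 1$ for $1\le\lvert\alpha\rvert\le 2$ since $\lvert v_i\rvert\le 1$, $0\le\psi'\le 1$, $\lvert\psi''\rvert\le 1$. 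The only point worth tightening is the existence of the plateau profile: the remark that each transition interval has length $2\ge 1$ does not by itself bound $\lvert\chi'\rvert$ by $1$; the cleanest justification is your implicit mollification idea made explicit, e.g.\ $\chi=\mathds{1}_{[b-1,a+1]}*\rho$ with $\rho\ge 0$ smooth, supported in $[-1,1]$, $\int\rho=1$ and $\max\rho\le 1$, so that $\chi'=\rho(\cdot-(b-1))-\rho(\cdot-(a+1))$ is a difference of bumps with disjoint supports and hence $\lvert\chi'\rvert\le\max\rho\le 1$, while $0\le\chi\le 1$ and $\chi\equiv 1$ on $[b,a]$. With that detail spelled out, the argument is complete.
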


\begin{remark}\label{rem:dist}
 \textup{(a)}
Since the distance $d$ between non-disjoint sets is zero, we can drop the assumption of disjointedness in the previous lemma without changing the statement.

 \textup{(b)}
For $E,F\subset\z$ compact, convex, $x,y\in\z$ and $r>0$ such that $E\subset B(x,r)$ and $F\subset B(y,r)$ we obtain
\[
 d(E,F)^{4/3}
\geq 2^{-1/3}\lvert x-y\rvert^{4/3}-(2r)^{4/3}.
\]
Indeed, we can estimate as follows
\[
 \lvert x-y\rvert
\leq 2r+d_e(E,F)
\leq 2r+d(E,F)
\leq 2^{1/4}((2r)^{4/3}+d(E,F)^{4/3})^{3/4}.
\]
\end{remark}

The following proposition relates the results of the previous section with the notion of off-diagonal estimates.
\begin{proposition}\label{prop:offdiag}
 Let $\theta\in(0,\pi/2]$ and $(T(z))_{z\in\Sigma(\theta)}$ be a family in $\cL(L^2(\z))$ that satisfies
\begin{equation}\label{eq:sp}
 T(z)=D_sT(s^4z)D_{1/s},
\qquad s\in(0,1),\ z\in\Sigma(\theta),
\end{equation}
where $D_s$ is the dilation operator, i.e., $D_sv(x)=v(sx)$ a.e. for all $v\in L^1_{\mathrm{loc}}(\z)$.
Further let $1\leq p\leq q<\infty$ and $M,\omega>0$ such that 
\[
 \lVert e^{\lambda\phi}T(z)e^{-\lambda\phi}u\rVert_q
\leq M\lvert z\rvert^{-\gamma_{pq}}e^{\omega(1+\lambda^4)\lvert z\rvert}\lVert u\rVert_p
\]
holds for all $z\in\Sigma(\theta)$, $\lambda>0$, $\phi\in\E$ and $u\in L^2(\z)\cap L^p(\z)$.
Then $(T(z))_{z\in\Sigma(\theta)}$ satisfies $L^p-L^q$ off-diagonal estimates.
\end{proposition}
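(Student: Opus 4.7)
The plan is to combine Davies' exponential twist with the dilation invariance~\eqref{eq:sp}. The twist alone produces a factor of the form $e^{\omega|z|}e^{\omega\lambda^4|z|-\lambda d(E,F)}$; optimizing in $\lambda>0$ yields the desired quartic-exponential decay, but the $\lambda$-independent factor $e^{\omega|z|}$ is unbounded on $\Sigma(\theta)$. The role of~\eqref{eq:sp} is precisely to neutralize this factor by rescaling $|z|$ down to a bounded quantity before the optimization is carried out.

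Fix $z\in\Sigma(\theta)$, convex compact sets $E,F\subset\z$ and $u\in L^2(\z)\cap L^p(\z)$ supported in $E$. I would set $s:=\min(1,|z|^{-1/4})\in(0,1]$, so that $s^4|z|\leq 1$. Using~\eqref{eq:sp} together with the standard changes of variable in the $L^p$- and $L^q$-norms,
\[
\|T(z)u\|_{L^q(F)}=s^{-N/q}\|T(s^4z)(D_{1/s}u)\|_{L^q(sF)},\qquad \|D_{1/s}u\|_p=s^{N/p}\|u\|_p,
\]
where $D_{1/s}u$ is supported in $sE$. Applying the Davies twist to $T(s^4z)$ with any $\phi\in\E$ and $\lambda>0$, and using the support of $D_{1/s}u$, I obtain
\[
\|T(s^4z)(D_{1/s}u)\|_{L^q(sF)}\leq Me^{-\lambda(\inf_{sF}\phi-\sup_{sE}\phi)}s^{-4\gamma_{pq}}|z|^{-\gamma_{pq}}e^{\omega(1+\lambda^4)s^4|z|}\|D_{1/s}u\|_p.
\]
The identity $4\gamma_{pq}=N(1/p-1/q)$ forces $s^{-N/q}\cdot s^{-4\gamma_{pq}}\cdot s^{N/p}=1$, so every power of $s$ cancels. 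Taking $\sup_{\phi\in\E}$ produces $d(sE,sF)$ in the exponent, and $e^{\omega s^4|z|}\leq e^\omega$ leaves
\[
\|T(z)u\|_{L^q(F)}\leq Me^\omega|z|^{-\gamma_{pq}}e^{\omega\lambda^4 s^4|z|}e^{-\lambda d(sE,sF)}\|u\|_p.
\]

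It remains to return to $d(E,F)$ and optimize. Lemma~\ref{lem:equiv} combined with the elementary identity $d_e(sE,sF)=s\,d_e(E,F)$ gives $d(sE,sF)\geq s\,d(E,F)/\sqrt N$. Setting $\tilde\lambda:=s\lambda>0$ rewrites the exponent as $\omega\tilde\lambda^4|z|-\tilde\lambda d(E,F)/\sqrt N$, and minimizing this quartic-linear expression in $\tilde\lambda>0$ produces the decay $\exp(-c_2\,d(E,F)^{4/3}/|z|^{1/3})$ with a constant $c_2>0$ depending only on $\omega$ and $N$, which is the claimed off-diagonal estimate. The only genuine obstacle is the $e^{\omega|z|}$ term: it cannot be absorbed by the twist and must be truncated via~\eqref{eq:sp}; once the rescaling is in place, the remaining steps are routine bookkeeping with dilations.
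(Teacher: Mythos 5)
Your proof is correct and uses exactly the same two ingredients as the paper: the Davies twist with optimization of $\lambda$ against $d(E,F)$, and the dilation identity \eqref{eq:sp} to neutralize the $e^{\omega|z|}$ factor (the paper performs the twist first and then lets $s\to0$, whereas you fix $s=\min(1,|z|^{-1/4})$ up front, but this is only a reordering of the same argument). The bookkeeping with $s^{-N/q}$, $s^{-4\gamma_{pq}}$, $s^{N/p}$ and with Lemma \ref{lem:equiv} matches the paper's computation, so nothing is missing.
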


\begin{proof}
 Let $z\in\Sigma(\theta)$, $E,F$ be convex, compact subsets of $\z$
and $u\in L^2(\z)\cap L^p(\z)$ supported in $E$.
Then the assumption yields 
\begin{align*}
 \lVert T(z)u\rVert_{L^q(F)}
&\leq\lVert e^{-\lambda\phi}\chi_F\rVert_\infty \lVert e^{\lambda\phi}T(z)e^{-\lambda\phi}e^{\lambda\phi}\chi_E u\rVert_q\\
&\leq e^{-\lambda\inf_F\phi}
M\lvert z\rvert^{-\gamma_{pq}}e^{\omega(1+\lambda^4)\lvert z\rvert}\lVert\chi_E e^{\lambda\phi}\rVert_\infty\lVert u\rVert_p\\
&\leq e^{-\lambda(\inf_F\phi-\sup_E\phi)}
M\lvert z\rvert^{-\gamma_{pq}}e^{\omega(1+\lambda^4)\lvert z\rvert}\lVert u\rVert_p
\end{align*}
for all $\lambda>0$ and $\phi\in\E$.
Minimising the right-hand side with respect to $\phi\in\E$
and choosing $\lambda$ as $\bigl(\frac{d(E,F)}{4\omega\lvert z\rvert}\bigr)^{1/3}$
we obtain
\[
\lVert T(z)u\rVert_{L^q(F)}
\leq e^{2\omega|z|}
M\lvert z\rvert^{-\gamma_{pq}}\exp\Bigl(-c_\omega\frac{d(E,F)^{4/3}}{\lvert z\rvert^{1/3}}\Bigr)\lVert u\rVert_p
\]
with $c_\omega=\frac{3}{4(4\omega)^{1/3}}$.
Now, we use the scaling property to get rid of the factor $e^{2\omega\lvert z\rvert}$. For $s\in(0,1)$ we estimate
\begin{align*}
\lVert T(z)u\rVert_{L^q(F)}
&=\lVert D_s\chi_{sF}T(s^4z)\chi_{sE}D_{1/s}u\rVert_q\\
&=s^{-\frac{N}{q}}\lVert \chi_{sF}T(s^4z)\chi_{sE}D_{1/s}u\rVert_q\\
&\leq e^{2\omega s^4\lvert z\rvert}M\lvert z\rvert^{-\gamma_{pq}}
\exp\Bigl(-c_\omega\frac{(d(sE,sF)/s)^{4/3}}{\lvert z\rvert^{1/3}}\Bigr)
s^{-\frac{N}{p}}\lVert D_{1/s}u\rVert_p\\
&\leq e^{2\omega s^4\lvert z\rvert}M\lvert z\rvert^{-\gamma_{pq}}\exp\Bigl(-\frac{c_\omega}{N^{2/3}}\frac{d(E,F)^{4/3}}{\lvert z\rvert^{1/3}}\Bigr)\lVert u\rVert_p.
\end{align*}
Taking $s\to 0$, we get $L^p-L^q$ off-diagonal estimates for $(T(z))_{z\in\Sigma(\theta)}$.
\end{proof}

Now, since $(e^{-zA})_{z\in\Sigma(\Theta/2)}$ satisfies the scaling property \eqref{eq:sp} thanks to the invariance of the Laplacian, we can infer from Lemma \ref{lem:pdest} the following statement.
\begin{corollary}
 $(e^{-zA})_{z\in\Sigma(\Theta/2)}$ satisfies $L^p-L^q$ off-diagonal estimates for all $p\in[p_0',2]$ and $q\in[2,p_0]$.
\end{corollary}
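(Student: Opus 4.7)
The plan is to apply Proposition \ref{prop:offdiag} with the family $T(z)=e^{-zA}$, parameters $\theta=\Theta/2$, and the bounds furnished by Lemma \ref{lem:pdest}. There are two hypotheses to verify: (i) the dilation identity $T(z)=D_s T(s^4z)D_{1/s}$, and (ii) the weighted $L^p$--$L^q$ bound on the twisted semigroup with the right polynomial-in-$\lambda$ exponential factor.

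For (i), I would observe that both pieces of $A=\Delta^2-c\lvert x\rvert^{-4}$ are homogeneous: a short chain-rule computation gives $\Delta^2(u\circ s\,\mathrm{id})=s^4(\Delta^2 u)\circ s\,\mathrm{id}$, while $\lvert x\rvert^{-4}\,u(sx)=s^{4}\lvert sx\rvert^{-4}\,u(sx)$. Hence $A D_s=s^4 D_s A$ on a suitable dense core, which in turn, by functional calculus (or directly from the form definition by replacing $u,v$ with $D_su,D_sv$), implies $D_s^{-1}e^{-zA}D_s=e^{-s^4 zA}$, equivalently the scaling property \eqref{eq:sp} for $s\in(0,1)$, $z\in\Sigma(\Theta/2)$.

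For (ii), the key identification is $e^{-zA_{\lambda\phi}}=e^{\lambda\phi}e^{-zA}e^{-\lambda\phi}$, valid since $A_{\lambda\phi}=e^{\lambda\phi}Ae^{-\lambda\phi}$ as a similarity of the generators. Thus Lemma \ref{lem:pdest} rewrites as
\[
\lVert e^{\lambda\phi}e^{-zA}e^{-\lambda\phi}u\rVert_q
\leq M_{pq}\lvert z\rvert^{-\gamma_{pq}}e^{2k(1+\lambda^4)\Real z}\lVert u\rVert_p
\]
for all $z\in\Sigma(\Theta/2)$, $\lambda\in\R$, $\phi\in\E$, $u\in L^2(\z)\cap L^p(\z)$, with $p'_0\leq p\leq 2\leq q\leq p_0$. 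Bounding $\Real z\leq\lvert z\rvert$ puts this in precisely the form required by Proposition \ref{prop:offdiag} with $\omega=2k$ and $M=M_{pq}$.

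Combining the two verifications, Proposition \ref{prop:offdiag} applies and delivers $L^p$--$L^q$ off-diagonal estimates for $(e^{-zA})_{z\in\Sigma(\Theta/2)}$ in the stated range. No step is genuinely hard here; the only place requiring a little care is confirming the dilation identity at the level of the semigroup rather than just formally on the generator, but this is immediate from the homogeneity computation together with the uniqueness of semigroup solutions.
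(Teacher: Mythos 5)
Your proof is correct and follows exactly the route the paper takes (which it compresses into a single sentence): verify the scaling property \eqref{eq:sp} from the homogeneity of $\Delta^2$ and of $c\lvert x\rvert^{-4}$, and feed the twisted bounds of Lemma \ref{lem:pdest} into Proposition \ref{prop:offdiag} via the identity $e^{-zA_{\lambda\phi}}=e^{\lambda\phi}e^{-zA}e^{-\lambda\phi}$ and $\Real z\leq\lvert z\rvert$. Your write-up is in fact more careful than the paper's, which only invokes the invariance of the Laplacian and omits the (equally necessary) homogeneity of the potential.
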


Finally, we are able to state the following theorem.

\begin{theorem}
The semigroup $(e^{-tA})_{t\geq 0}$ on $L^2(\z)$ extrapolates to a bounded holomorphic $C_0$-semigroup on $L^p(\z)$ for all $p\in[p'_0,p_0]$.
\end{theorem}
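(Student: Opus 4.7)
The plan is to convert the off-diagonal estimates of the preceding Corollary into uniform $L^p$-$L^p$ bounds for $e^{-zA}$ on the sector $\Sigma(\Theta/2)$ via a partition of $\z$ at the natural scale $r=\lvert z\rvert^{1/4}$, and then extract the extrapolated holomorphic $C_0$-semigroup by density and a standard weak-holomorphy argument.

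For the main step, fix $p\in[p_0',2]$ and $z\in\Sigma(\Theta/2)$ and tile $\z$ by essentially disjoint closed cubes $\{Q_i\}_{i\in\Z^N}$ of side length $r=\lvert z\rvert^{1/4}$. For $u\in L^p(\z)\cap L^2(\z)$ decompose $u=\sum_j\chi_{Q_j}u$. Hölder's inequality on $Q_i$, using $\lvert Q_i\rvert=\lvert z\rvert^{N/4}$, combined with the $L^p$-$L^2$ off-diagonal estimate yields
\[
\lVert e^{-zA}(\chi_{Q_j}u)\rVert_{L^p(Q_i)}
\leq \lvert z\rvert^{\gamma_{p2}}\cdot c_1\lvert z\rvert^{-\gamma_{p2}}\exp\Bigl(-c_2\frac{d(Q_i,Q_j)^{4/3}}{\lvert z\rvert^{1/3}}\Bigr)\lVert u\rVert_{L^p(Q_j)},
\]
in which the two factors $\lvert z\rvert^{\pm\gamma_{p2}}$ cancel exactly -- the very reason for this scale choice. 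Combined with Lemma \ref{lem:equiv} and the elementary bound $d_e(Q_i,Q_j)\geq r(\lVert i-j\rVert_\infty-1)_+$, this becomes $C_1\exp(-c_3(\lVert i-j\rVert_\infty-1)_+^{4/3})\lVert u\rVert_{L^p(Q_j)}$ with $C_1,c_3$ independent of $z$. Summing over $j$, raising to the $p$-th power, summing over $i$, and applying Young's inequality for discrete convolution against the summable kernel on $\Z^N$ yields $\lVert e^{-zA}u\rVert_p\leq C\lVert u\rVert_p$ with $C$ independent of $z\in\Sigma(\Theta/2)$ and of $u$.

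By density of $L^p(\z)\cap L^2(\z)$ in $L^p(\z)$ this extends $e^{-zA}$ to a uniformly bounded family $T_p(z)$ on $L^p(\z)$ for $p\in[p_0',2]$; the range $p\in[2,p_0]$ is covered by duality using the self-adjointness of $A$ in $L^2(\z)$ and the symmetry of the sector about the real axis. The semigroup law on $L^p$ is inherited from $L^2$ by density, and holomorphy on $\Sigma(\Theta/2)$ follows from weak $L^p$-holomorphy, which holds because $z\mapsto\langle e^{-zA}u,v\rangle$ is $L^2$-holomorphic for $u\in L^p\cap L^2$, $v\in L^{p'}\cap L^2$, and $L^{p'}\cap L^2$ is dense in $L^{p'}$. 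Strong continuity at $0$ on $L^p$ comes from the uniform bound together with $L^p$-convergence on a dense subspace of nice test functions (e.g.\ compactly supported smooth functions vanishing near the origin, for which $Au\in L^p$ and $e^{-zA}u-u=-\int_0^z Ae^{-sA}u\,ds$ is controlled by the $L^p$-$L^p$ bound). The main technical hurdle is tracking the $z$-uniformity of constants through the tiling and convolution step; the decisive point is the exact cancellation of $\lvert z\rvert^{\pm\gamma_{p2}}$ at the scale $r=\lvert z\rvert^{1/4}$, which is what makes the partition argument work.
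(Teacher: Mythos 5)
Your proposal is correct and follows essentially the same route as the paper: tiling $\z$ into cubes of side $\lvert z\rvert^{1/4}$ so that the H\"older factor $\lvert Q\rvert^{\pm(\frac1p-\frac12)}$ exactly cancels the $\lvert z\rvert^{\mp\gamma_{p2}}$ from the off-diagonal estimate, followed by a discrete convolution bound on $\ell^p(\Z^N)$. The only cosmetic difference is that you treat $p\in[p_0',2]$ directly and dualize for $p\in[2,p_0]$, while the paper does the mirror image; your added details on density, weak holomorphy and strong continuity fill in steps the paper leaves implicit.
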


\begin{proof}
It suffices to show that the family $(e^{-zA})_{z\in\Sigma(\Theta/2)}$ is uniformly bounded on $L^p(\z)$ to infer the extrapolation to a bounded holomorphic $C_0$-semigroup on $L^p(\z)$. Moreover, we only have to treat the case $p\in(2,p_0]$. 

Let $p\in(2,p_0]$, $z\in\Sigma(\Theta/2)$
and $C_n$ be the cube with centre $n\lvert z\rvert^{1/4}$ and edge length $\lvert z\rvert^{1/4}$ for all $n\in\Z^N$.
Then, using the $L^2-L^p$ off-diagonal estimates for $(e^{-zA})_{z\in\Sigma(\Theta/2)}$, Remark \ref{rem:dist}(b) and H\"older's inequality, we obtain
\begin{align*}
\lVert\chi_{C_n} e^{-zA}\chi_{C_m} u\rVert_p
&\leq c_1e^{-c_2\lvert m-n\rvert^{4/3}}\lvert z\rvert^{-\gamma_{2p}}\lvert C_m\rvert^{\frac{1}{2}-\frac{1}{p}}\lVert\chi_{C_m} u\rVert_p\\
&= c_1e^{-c_2\lvert m-n\rvert^{4/3}}\lVert\chi_{C_m} u\rVert_p
\end{align*}
for all $m,n\in\Z^N$ and $u\in L^2(\z)\cap L^p(\z)$ with $c_1,c_2>0$ independent of $z$, $u$, $m$ and $n$.
Since the operator $B\colon\ell^1(\Z^N)\to\ell^1(\Z^N)$ with
\[
 (Bx)_m=c_1\sum_{n\in\Z^N}e^{-c_2\lvert m-n\rvert^{4/3}}x_n,
\qquad m\in\Z^N,\ x\in\ell^1(\Z^N)
\]
is bounded on $\ell^1(\Z^N)$ as well as on $\ell^\infty(\Z^N)$,
the Riesz-Thorin interpolation theorem yields that $B$ is also bounded on $\ell^p(\Z^N)$.
Setting $\hat{u}=(\lVert\chi_{C_n}u\rVert_p)_{n\in\Z^N}$
we conclude
\[
\Vert e^{-zA}u\Vert_p
\leq\lVert B\hat{u}\rVert_{\ell^p}
\leq\lVert B\rVert_{\ell^p\to\ell^p}\lVert\hat{u}\rVert_{\ell^p}
\leq\lVert B\rVert_{\ell^p\to\ell^p}\lVert u\rVert_{p}
\]
for all $u\in L^2(\z)\cap L^p(\z)$.
\end{proof}

\begin{remark}\label{rem:LSV}
We have provided this proof as an application of the previous results, which we will also need in the next section to prove the boundedness of the Riesz transform. Actually, we could have also applied \cite[Proposition 6.1]{LSV}, which holds in a general setting of higher order operators defined by closed, sectorial sesquilinear forms. We recall this statement according to the notations of our situation.
\end{remark}

\begin{proposition}\label{prop:LSV}
Let $\A$ be a closed, sectorial sesquilinear form in $L^2(\z)$ with $D(\A)=H^2(\z)$ such that for some $C,k>0$ 
\[
\tfrac{1}{2}\lVert \Delta u\rVert_2^2
\leq\Real \A(u)
\leq C(\lVert \Delta u\rVert_2^2+\lVert u\rVert_2^2)
\]
and
\[
\lvert\A_{\lambda\phi}(u)-\Real \A(u)\rvert
\leq\tfrac{1}{4}\Real\A(u)+k(1+\lambda^4)\lVert u\rVert_2^2
\]
hold for all $u\in H^2(\z)$, $\lambda\geq 0$ and $\phi\in\E$.
Then the holomorphic $C_0$-semigroup $(e^{-tA})_{t\geq 0}$ on $L^2(\z)$, associated with $\A$, extrapolates to a holomorphic $C_0$-semigroup $T_p=(e^{-tA_p})_{t\geq 0}$ on $L^p(\z)$ for all $p\in[p'_0,p_0]$. The sector of holomorphy of $T_p$ and the spectrum $\sigma(A_p)$ are $p$-independent.
\end{proposition}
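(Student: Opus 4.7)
The plan is to reproduce the arguments of Sections 2 and 3 in the abstract setting, using only the hypothesised form bounds in place of the explicit computations done for $A=\Delta^2-V$.

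First, the perturbation bound on $\A_{\lambda\phi}$ combined with the coercivity of $\Real\A$ makes $\A_{\lambda\phi}+2k(1+\lambda^4)$ closed and sectorial with a common sector $\Sigma(\Theta)$ depending only on $k$ and $C$, not on $(\lambda,\phi)$. This yields the $L^2$-bound $\lVert e^{-zA_{\lambda\phi}}\rVert_{2\to 2}\leq e^{2k(1+\lambda^4)\Real z}$ on $\Sigma(\Theta)$. Applying the Cauchy formula to the shifted semigroup and invoking $\tfrac{1}{2}\lVert\Delta u\rVert_2^2\leq\Real\A(u)$ gives the analogue of Lemma \ref{lem:ssd}(b), namely $\lVert\Delta e^{-zA_{\lambda\phi}}\rVert_{2\to 2}\leq M_\Theta\lvert z\rvert^{-1/2}e^{2k(1+\lambda^4)\Real z}$ on $\Sigma(\Theta/2)$. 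The Sobolev embedding $H^2(\z)\hookrightarrow L^{p_0}(\z)$ then upgrades this to an $L^2-L^{p_0}$ estimate; Riesz-Thorin interpolation together with a duality argument (using $\A_{\lambda\phi}^*=\A_{-\lambda\phi}$, so the adjoint semigroup satisfies the same bounds) produces the full $L^p-L^q$ twisted estimates of Lemma \ref{lem:pdest} for $p'_0\leq p\leq 2\leq q\leq p_0$.

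Next I would feed these into the Davies perturbation argument of Proposition \ref{prop:offdiag}, noting one genuine departure from the earlier reasoning: the scaling property $T(z)=D_sT(s^4z)D_{1/s}$ is no longer available in the abstract setting, so the factor $e^{2\omega\lvert z\rvert}$ arising from the $\lambda$-optimisation cannot be removed by letting $s\to 0$. Nevertheless, the optimisation in $\lambda$ and $\phi\in\E$ still produces a local (in $z$) off-diagonal bound
\[
\lVert\chi_F e^{-zA}\chi_E u\rVert_q
\leq c_1\lvert z\rvert^{-\gamma_{pq}}e^{c_3\lvert z\rvert}\exp\Bigl(-c_2\tfrac{d(E,F)^{4/3}}{\lvert z\rvert^{1/3}}\Bigr)\lVert u\rVert_p
\]
on $\Sigma(\Theta/2)$. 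Covering $\z$ by cubes of edge length $\lvert z\rvert^{1/4}$ and running the $\ell^p(\Z^N)$ convolution interpolation used in Theorem \ref{prop:offdiag}'s successor then bounds $\lVert e^{-zA}\rVert_{p\to p}$ uniformly on every compact subset of $\Sigma(\Theta/2)$. Combined with strong continuity at $z=0$, obtained via density of $L^2\cap L^p$ in $L^p$ and consistency with the $L^2$-semigroup, this extrapolates $(e^{-tA})$ to a consistent family of holomorphic $C_0$-semigroups $T_p$ on $L^p(\z)$ whose common sector of holomorphy contains $\Sigma(\Theta/2)$, giving both the first assertion and $p$-independence of the sector.

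For the $p$-independence of $\sigma(A_p)$, I would invoke a Hempel-Voigt-type argument built on the off-diagonal estimates. Consistency of $T_p$ across $p$ transfers, via the Laplace transform, to consistency of the resolvents $(\lambda+A_p)^{-1}$ on $L^p\cap L^q$ for $\Real\lambda$ sufficiently large, and from there a Stein interpolation of the operator-valued analytic function $\zeta\mapsto(\lambda+A_{p(\zeta)})^{-1}$ on the strip $\{\zeta\in\C:\Real\zeta\in[1/p_0,1-1/p_0]\}$ propagates resolvent bounds from the endpoints to every interior $p$. The main obstacle, I expect, lies in this last step: without the scaling property our twisted estimates are only locally uniform in $z$, so the interpolation must be performed with careful bookkeeping of the local constants in order to confirm that local boundedness of the interpolated resolvents on each $L^p$ actually forces $\lambda\in\rho(A_p)$; the converse inclusion then follows by the symmetry of the hypotheses under $\lambda\mapsto-\lambda$ in the Davies exponentials.
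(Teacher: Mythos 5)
First, note that the paper does not actually prove this proposition: as Remark \ref{rem:LSV} makes explicit, the statement is \cite[Proposition 6.1]{LSV}, recalled verbatim from the literature, so there is no in-paper argument to compare yours against. Judged on its own merits, your reconstruction of the first assertion is essentially sound. You correctly identify the one real departure from Sections 2--3, namely that the scaling identity \eqref{eq:sp} is unavailable in the abstract setting, and you correctly observe that the surviving factor $e^{c_3\lvert z\rvert}$ is harmless here because the proposition, unlike the paper's main extrapolation theorem, claims only a holomorphic $C_0$-semigroup rather than a bounded one (on a sector one has $\lvert z\rvert\leq\Real z/\cos(\Theta/2)$, so the factor is absorbed into a quasi-boundedness constant). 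The only detail you gloss over is strong continuity at $t=0$ at the endpoint exponent $p=p_0$ itself: the interpolation trick $\lVert e^{-tA}u-u\rVert_p\leq\lVert e^{-tA}u-u\rVert_2^{1-\theta}\lVert e^{-tA}u-u\rVert_{p_0}^{\theta}$ works only for $p<p_0$, and the endpoint requires a separate argument.

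The genuine gap is in the spectral $p$-independence. Your Stein interpolation of $\zeta\mapsto(\lambda+A_{p(\zeta)})^{-1}$ on the strip presupposes that $\lambda$ already lies in $\rho(A_{p_0})\cap\rho(A_{p_0'})$, i.e.\ exactly the endpoint instances of the claim being proved; as written the argument is circular, and you only half-acknowledge this. The Hempel--Voigt/Kunstmann route actually used in \cite{LSV} runs differently: for $\lambda\in\rho(A_2)$ one derives \emph{weighted (twisted) estimates for the resolvent itself} --- for instance by Laplace-transforming the twisted semigroup bounds and optimising over the Davies parameter, which needs care because of the $e^{2k(1+\lambda^4)\Real z}$ growth --- concludes that $(\lambda-A_2)^{-1}$ extends to a bounded operator $R_p(\lambda)$ on $L^p$ for all $p\in[p_0',p_0]$, and then identifies $R_p(\lambda)$ with $(\lambda-A_p)^{-1}$ by analytic continuation from large real $\lambda$, using connectedness of $\rho(A_2)$ and local uniform boundedness of $R_p$. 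Only the converse inclusion uses interpolation of resolvents, and there the endpoints are legitimately known. Relatedly, you establish only that all $T_p$ are holomorphic on the common sector $\Sigma(\Theta/2)$; this does not yet show that their maximal sectors of holomorphy coincide, which also rests on the resolvent analysis you have not completed.
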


\section{Riesz transform}

We show that $\Delta A^{-1/2}\in\mathcal{L}(L^p(\z))$ for all $p\in(\frac{2N}{N+4},2]$. We already know that the Riesz transform of the operator $A$ is bounded on $L^2(\z)$ thanks to the inequality
\[
\eta\lVert\Delta u\rVert_2^2
\leq a(u)
=\lVert A^{1/2}u\rVert_2^2,
\qquad u\in H^2(\R^N)
\]
and the selfadjointness of $A^{1/2}$. Then, provided $\Delta A^{-1/2}$ is of weak type $(p'_0,p'_0)$, we can use the Marcinkiewicz interpolation theorem to obtain the boundedness on $L^p(\z)$ for $p'_0< p\leq2$.

Let us recall the definition of weak type operators. Let $(X,\Sigma,\mu)$ be a measure space. An operator $L$ is of weak type $(p,p)$ for $1\leq p<\infty$, if there exists a constant $C$ such that for any measurable function $f$, such a relation holds
\[\mu\{x:\lvert Lf(x)\rvert\geq\lambda\}
\leq C\lambda^{-p}\lVert f\rVert_p^p.\]

In order to prove that $\Delta A^{-1/2}$ is of weak type $(p_0',p_0')$ we make use of \cite[Theorem 1.1]{BK} in the following adapted form.

\begin{theorem}\label{thm:blunckkust}
Let $1\leq p<2<q\leq\infty$, $q_0\in(p,\infty]$ and $(e^{-tA})_{t\geq 0}$ be a bounded holomorphic semigroup on $L^2(\z)$ such that $A$ is injective and has dense range. Further, let $\alpha\in[0,1)$ and $B$ a linear operator satisfying $D(A^\alpha)\subset D(B)$ and the weighted norm estimates
\begin{align}\label{eq:riesz1}
\lVert \chi_{B(x,t^{1/4})}e^{-tA}\chi_{B(y,t^{1/4})}\rVert_{p\to q}
&\leq c_1t^{-\gamma_{pq}}\exp\Bigl(-c_2\frac{\lvert x-y\rvert^{4/3}}{t^{1/3}}\Bigr)\\
\label{eq:riesz2}
\lVert \chi_{B(x,t^{1/4})}t^{\alpha}Be^{-e^{i\sigma}tA}\chi_{B(y,t^{1/4})}\rVert_{p\to q_0}
&\leq c_1t^{-\gamma_{pq_0}}\exp\Bigl(-c_2\frac{\lvert x-y\rvert^{4/3}}{t^{1/3}}\Bigr)
\end{align}
hold for all $x,y\in\z$, $t>0$, $\lvert\sigma\rvert<\frac{\pi}{2}-\theta$, for some $\theta>0$. Then $BA^{-\alpha}$ is of weak type $(p,p)$ provided $BA^{-\alpha}$ is of weak type $(2,2)$.
\end{theorem}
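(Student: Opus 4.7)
The plan is to run the non-integral Calder\'on--Zygmund machinery of Blunck--Kunstmann. Fix $f\in L^p(\z)\cap L^2(\z)$ and $\lambda>0$. First I would perform a Calder\'on--Zygmund decomposition of $|f|^p$ at height $\lambda^p$: write $f=g+\sum_i b_i$ with $b_i$ supported in pairwise disjoint dyadic cubes $Q_i$ of side-length $r_i$, $\|g\|_\infty\lesssim\lambda$, $\|b_i\|_p^p\lesssim\lambda^p|Q_i|$ and $\sum_i|Q_i|\lesssim\lambda^{-p}\|f\|_p^p$. Setting $E^*:=\bigcup_i 4Q_i$ (so $|E^*|\lesssim\lambda^{-p}\|f\|_p^p$), by subadditivity of the distribution function it suffices to bound
\[
\mu\{|BA^{-\alpha}g|>\lambda/2\}+\mu\{x\notin E^*:|BA^{-\alpha}b(x)|>\lambda/2\}
\]
by $C\lambda^{-p}\|f\|_p^p$.

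The good-part estimate uses only the weak type $(2,2)$ of $BA^{-\alpha}$: combining $\|g\|_\infty\lesssim\lambda$ with $\|g\|_p\le\|f\|_p$ gives $\|g\|_2^2\lesssim\lambda^{2-p}\|f\|_p^p$, hence $\mu\{|BA^{-\alpha}g|>\lambda/2\}\lesssim\lambda^{-2}\|g\|_2^2\lesssim\lambda^{-p}\|f\|_p^p$.

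For the bad part off $E^*$, set $t_i:=r_i^4$ and fix an integer $m>\alpha$ (since $\alpha<1$ the choice $m=1$ already works). The binomial identity $I=(I-e^{-t_iA})^m+\sum_{k=1}^m\binom{m}{k}(-1)^{k+1}e^{-kt_iA}$ decomposes
\[
BA^{-\alpha}b_i=BA^{-\alpha}(I-e^{-t_iA})^m b_i+\sum_{k=1}^m\binom{m}{k}(-1)^{k+1}BA^{-\alpha}e^{-kt_iA}b_i.
\]
Using $A^{-\alpha}=c_\alpha\int_0^\infty s^{\alpha-1}e^{-sA}\,ds$, each semigroup piece becomes an integral of $Be^{-(s+kt_i)A}b_i$, which I would localise on dyadic annuli $C_{i,j}$ at distance $\sim 2^jr_i$ from $Q_i$ and control via (\ref{eq:riesz2}) applied at scale $\tau=s+kt_i$; a H\"older step on $C_{i,j}$ converts the $L^{q_0}$-bound into an $L^p$-bound, and the factor $\exp(-c(2^jr_i)^{4/3}/\tau^{1/3})$ both makes the $s$-integral converge and provides geometric summation in $j$. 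The cancellation piece is treated by an analogous telescoped representation of $(I-e^{-t_iA})^mA^{-\alpha}$ (using (\ref{eq:riesz1}) to handle the localisation of the $e^{-t_iA}$ factors on $Q_i$), in which the $m$-fold factor $(I-e^{-t_iA})^m$ supplies the integrability near $s=0$ needed to offset the singular $s^{\alpha-1}$. Summing these bounds over all $i$ and invoking $\sum_i|Q_i|\lesssim\lambda^{-p}\|f\|_p^p$ closes the bad-part estimate.

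The main obstacle is the bad-part argument: one has to coordinate the $s^{\alpha-1}$ weight of the fractional power with the cancellation in $(I-e^{-t_iA})^m$ (which forces the condition $m>\alpha$), and to convert the $L^p\to L^{q_0}$ off-diagonal bound (\ref{eq:riesz2}) into a genuine $L^p$-estimate on each annulus $C_{i,j}$ by local H\"older's inequality, exploiting the exponential $d(E,F)^{4/3}/\tau^{1/3}$-decay to make both the $s$-integrals converge and the dyadic sums over $j$ geometrically convergent. Assumption (\ref{eq:riesz1}) is used exactly for the localisation errors caused by applying the $e^{-kt_iA}$ factors to atoms supported in $Q_i$; once those are in hand, the passage to the weak type $(p,p)$ bound is routine.
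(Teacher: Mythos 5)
First, a point of reference: the paper does not prove this theorem at all --- it is quoted (in a form adapted to fourth-order scaling) from Blunck--Kunstmann \cite{BK}, so there is no internal proof to compare with. Your sketch does follow the strategy of the actual Blunck--Kunstmann proof: a Calder\'on--Zygmund decomposition of $|f|^p$ at height $\lambda^p$, the weak $(2,2)$ hypothesis for the good part, and the regularisation $I-(I-e^{-t_iA})^m$ with $t_i=r_i^4$ for the bad part. The good-part estimate and the overall architecture are fine.

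There are, however, two genuine gaps in the bad-part argument. (i) The claim that ``$m>\alpha$, so $m=1$ already works'' is wrong, and the reason reveals a misreading of where the cancellation acts. Writing $BA^{-\alpha}(I-e^{-t_iA})^m=\int_0^\infty h_m(\tau)\,Be^{-\tau A}\,d\tau$ with $|h_m(\tau)|\lesssim\min(\tau^{\alpha-1},\,t_i^m\tau^{\alpha-1-m})$, the singular end is $\tau\to\infty$, not $\tau\to 0$ (for $j\ge 2$ the factor $\exp(-c(2^jr_i)^{4/3}/\tau^{1/3})$ from \eqref{eq:riesz2} handles small $\tau$, and $\tau^{\alpha-1}$ is integrable at $0$ anyway). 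On the range $\tau\gtrsim(2^jr_i)^4$, where the exponential gives nothing, the tail contributes $\lesssim t_i^{-\gamma_{pq_0}}2^{-4j(m+\gamma_{pq_0})}$ to $\lVert\chi_{C_{i,j}}BA^{-\alpha}(I-e^{-t_iA})^m\chi_{Q_i}\rVert_{p\to q_0}$. After the H\"older step on $C_{i,j}$ (which costs $|C_{i,j}|^{1/q_0'}\sim(2^jr_i)^{N/q_0'}$) and the summation $\sum_i\sum_j$, the powers of $2^j$ balance only if $4m>N/p'$. In the paper's application $p=p_0'$, i.e.\ $m>(N-4)/8$, so $m=1$ fails already for $N\ge 13$; the number of cancellation factors must be taken large depending on $N$, $p$, $q_0$. (ii) For the same reason, the non-cancellation terms $BA^{-\alpha}e^{-kt_iA}b_i$ cannot be handled by localising the subordination integral on annuli and invoking \eqref{eq:riesz2}: there the tail only yields $2^{-4j\gamma_{pq_0}}$, and the corresponding $j$-sum diverges like $\sum_j 2^{jN/p'}$ for every $p>1$. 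In the Blunck--Kunstmann scheme these terms are \emph{not} estimated off-diagonally at all; the functions $\sum_{i,k}c_k e^{-kt_iA}b_i$ are recombined with $g$ and estimated in $L^2$ (via $L^q$ with $q>2$ and H\"older on the dilated cubes), after which the weak $(2,2)$ hypothesis on $BA^{-\alpha}$ is applied. That is the actual role of hypothesis \eqref{eq:riesz1} and of the strict inequality $q>2$ --- not the ``localisation errors'' you assign to it. With $m$ chosen large enough and this redistribution of the $e^{-kt_iA}b_i$ terms, your outline becomes the correct proof.
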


We can now state the main result of this section.

\begin{theorem}\label{thm:boundriesz}
The Riesz transform of the operator $A$ is bounded on $L^p(\z)$ for all $p\in(p_0',2]$.
\end{theorem}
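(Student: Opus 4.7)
The plan is to apply Theorem \ref{thm:blunckkust} with $(B,L,\Omega)=(\Delta,A,\z)$, $p=q_0=p'_0$, $q=2$ and $\alpha=1/2$ to deduce that $\Delta A^{-1/2}$ is of weak type $(p'_0,p'_0)$, and then to interpolate \`a la Marcinkiewicz against the strong $(2,2)$ bound already recorded at the beginning of the section. Among the structural hypotheses of Theorem \ref{thm:blunckkust}, injectivity of $A$ follows from the coercivity $\eta\lVert\Delta u\rVert_2^2\leq\A(u)$ (the only $u\in H^2(\z)$ with $\Delta u=0$ being $u=0$), and density of the range from the self-adjointness of $A$. The off-diagonal estimate \eqref{eq:riesz1} is a direct transcription of the $L^{p'_0}$--$L^2$ off-diagonal estimates for $(e^{-zA})_{z\in\Sigma(\Theta/2)}$ proved in the corollary to Proposition \ref{prop:offdiag}, after converting $d(E,F)$ to $\lvert x-y\rvert$ via Remark \ref{rem:dist}(b). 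Weak type $(2,2)$ of $\Delta A^{-1/2}$ is a consequence of the $L^2$-boundedness already noted.

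The heart of the matter is thus \eqref{eq:riesz2}, i.e., $L^{p'_0}$--$L^2$ off-diagonal estimates for $(\Delta e^{-zA})_{z\in\Sigma(\Theta/2)}$. I would obtain these from Proposition \ref{prop:offdiag} applied to the rescaled family $T(z):=z^{1/2}\Delta e^{-zA}$. The identity $D_s\Delta D_{1/s}=s^{-2}\Delta$ combined with the scaling $D_se^{-(s^4z)A}D_{1/s}=e^{-zA}$ of the semigroup (already used in the previous section) yields $D_sT(s^4z)D_{1/s}=T(z)$, so \eqref{eq:sp} holds. It is then enough to establish the weighted $L^{p'_0}$--$L^2$ bound
\[
\lVert e^{\lambda\phi}\Delta e^{-zA}e^{-\lambda\phi}u\rVert_2
\leq C\lvert z\rvert^{-1}e^{\omega(1+\lambda^4)\lvert z\rvert}\lVert u\rVert_{p'_0},
\]
for $z\in\Sigma(\Theta/2)$, $\lambda>0$, $\phi\in\E$ and $u\in L^2(\z)\cap L^{p'_0}(\z)$; after multiplication by $\lvert z\rvert^{1/2}$ this is exactly the hypothesis of Proposition \ref{prop:offdiag} for $T(z)$ with $\gamma_{p'_0 2}=1/2$.

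To derive this weighted bound I would factor
\[
e^{\lambda\phi}\Delta e^{-zA}e^{-\lambda\phi}
=\bigl(\Delta-2\lambda\nabla\phi\cdot\nabla+\lambda^2\lvert\nabla\phi\rvert^2-\lambda\Delta\phi\bigr)\,e^{-zA_{\lambda\phi}}
\]
and split $e^{-zA_{\lambda\phi}}=e^{-(z/2)A_{\lambda\phi}}\circ e^{-(z/2)A_{\lambda\phi}}$. Denoting $v:=e^{-zA_{\lambda\phi}}u$, Lemma \ref{lem:pdest} applied to the first half of the semigroup gives an $L^{p'_0}\to L^2$ bound with factor $\lvert z\rvert^{-1/2}e^{\omega(1+\lambda^4)\lvert z\rvert}$, and Lemma \ref{lem:ssd} applied to the second half then yields $\lVert v\rVert_2=O(\lvert z\rvert^{-1/2})$ and $\lVert\Delta v\rVert_2=O(\lvert z\rvert^{-1})$, both times $e^{\omega(1+\lambda^4)\lvert z\rvert}\lVert u\rVert_{p'_0}$. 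Gagliardo--Nirenberg then supplies $\lVert\nabla v\rVert_2=O(\lvert z\rvert^{-3/4})e^{\omega(1+\lambda^4)\lvert z\rvert}\lVert u\rVert_{p'_0}$. Inserting these together with the bounds $\lvert\nabla\phi\rvert,\lvert\Delta\phi\rvert\leq C_N$ for $\phi\in\E$ into the factorization produces an estimate of the form
\[
C\lvert z\rvert^{-1}\bigl(1+\lambda\lvert z\rvert^{1/4}+\lambda\lvert z\rvert^{1/2}+\lambda^2\lvert z\rvert^{1/2}\bigr)e^{\omega(1+\lambda^4)\lvert z\rvert}\lVert u\rVert_{p'_0},
\]
and each polynomial factor, being at most a power of $\lambda^4\lvert z\rvert$ plus $\lambda^2\lvert z\rvert$, is absorbed into the exponential after enlarging $\omega$.

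With \eqref{eq:riesz2} in hand (passing once more from $d(E,F)$ to $\lvert x-y\rvert$ via Remark \ref{rem:dist}(b)), Theorem \ref{thm:blunckkust} furnishes the weak type $(p'_0,p'_0)$ bound for $\Delta A^{-1/2}$, and Marcinkiewicz interpolation between this and the strong type $(2,2)$ yields $\Delta A^{-1/2}\in\mathcal{L}(L^p(\z))$ for $p\in(p'_0,2]$. I expect the principal obstacle to be the weighted bound in the third paragraph: conjugating $\Delta$ by $e^{\lambda\phi}$ produces first- and zeroth-order corrections of polynomial size in $\lambda$, and the delicate point is to balance them against the semigroup decay via Gagliardo--Nirenberg so that the polynomial factor is absorbed into $e^{\omega(1+\lambda^4)\lvert z\rvert}$ while the sharp $\lvert z\rvert^{-1}$ singularity, needed to match $\gamma_{p'_0 2}=1/2$ after multiplication by $\lvert z\rvert^{1/2}$, is preserved.
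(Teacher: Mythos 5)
Your proposal follows essentially the same route as the paper: conjugate $\Delta$ by $e^{\lambda\phi}$, split the semigroup as $e^{-(z/2)A_{\lambda\phi}}\circ e^{-(z/2)A_{\lambda\phi}}$, combine Lemma \ref{lem:pdest} with Lemma \ref{lem:ssd} and Gagliardo--Nirenberg to get the weighted $L^{p_0'}\to L^2$ bound of order $\lvert z\rvert^{-1}$ with the polynomial $\lambda$-factors absorbed into $e^{\omega(1+\lambda^4)\lvert z\rvert}$, then pass to off-diagonal estimates via Proposition \ref{prop:offdiag} and conclude by Theorem \ref{thm:blunckkust} and Marcinkiewicz interpolation. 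The one point to correct is your parameter assignment in Theorem \ref{thm:blunckkust}: you take $p=q_0=p_0'$ and $q=2$, which violates both hypotheses $1\leq p<2<q\leq\infty$ and $q_0\in(p,\infty]$; the admissible choice is $(p,q,q_0)=(p_0',p_0,2)$. Consequently \eqref{eq:riesz1} is an $L^{p_0'}\to L^{p_0}$ estimate, not the $L^{p_0'}\to L^2$ estimate you cite --- this is harmless, since the corollary to Proposition \ref{prop:offdiag} provides exactly the $L^{p_0'}$--$L^{p_0}$ off-diagonal bounds, and your treatment of \eqref{eq:riesz2} already implicitly uses $q_0=2$, but as written the application of the abstract criterion does not go through.
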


\begin{proof}
We will show that the assumptions of Theorem \ref{thm:blunckkust} in the setting $(B,\alpha,p,q,q_0)=(\Delta,1/2,p_0',p_0,2)$ are satisfied to infer that $\Delta A^{-1/2}$ is of weak type $(p'_0,p'_0)$.

First, we observe that $A$ is injective and selfadjoint and has therefore dense range. Moreover, we have $D(A^{1/2})=D(\Delta)$ and $\Delta A^{-1/2}$ is bounded on $L^2(\z)$, hence of weak type $(2,2)$, as was pointed out above.
Now, it remains to show that estimates of the form \eqref{eq:riesz1} and \eqref{eq:riesz2} are satisfied. Due to Remark \ref{rem:dist}(b), such estimates are direct consequences of $L^{p_0'}-L^{p_0}$ off-diagonal estimates for $(e^{-zA})_{z\in\Sigma(\Theta/2)}$, which we have already obtained, and $L^{p_0'}-L^2$ off-diagonal estimates for the family $(\lvert z\rvert^{1/2}\Delta e^{-zA})_{z\in\Sigma(\Theta/2)}$. To achieve the last ones, we show that
\begin{equation}\label{eq:2perdelta}
\lVert e^{\lambda\phi}\Delta e^{-zA}e^{-\lambda\phi}\rVert_{2\to2}
\leq M\lvert z\rvert^{-1/2}e^{\omega(1+\lambda^4)\lvert z\rvert}
\end{equation}
holds for all $z\in\Sigma(\Theta/2)$, $\lambda\in\R$ and $\phi\in\E$ with some $M,\omega>0$. Indeed, we compute
\begin{align*}
e^{\lambda\phi}\Delta e^{-zA}e^{-\lambda\phi}u
&= e^{\lambda\phi}\Delta e^{-\lambda\phi}e^{-zA_{\lambda\phi}}u\\
&=(\lambda^2\lvert\nabla\phi\rvert^2-\lambda\Delta\phi)e^{-zA_{\lambda\phi}}u
-2\lambda\nabla\phi\cdot\nabla e^{-zA_{\lambda\phi}}u\\
&\quad +\Delta e^{-zA_{\lambda\phi}}u,
\end{align*}
which can be estimated, thanks to \eqref{eq:22esttw} and \eqref{eq:twistedlapl}, in the following way
\begin{align*}
\lVert e^{\lambda\phi}\Delta e^{-zA}e^{-\lambda\phi}u\rVert_2^2
&\leq 8N^2(1+\lambda^4)\lVert e^{-zA_{\lambda\phi}}u\rVert_2^2\\
&\quad +16N\lambda^2\lVert\nabla e^{-zA_{\lambda\phi}}u\rVert_2^2
+4\lVert\Delta e^{-zA_{\lambda\phi}}u\rVert_2^2\\
&\leq 16N^2(1+\lambda^4)\lVert e^{-zA_{\lambda\phi}}u\rVert_2^2
+12\lVert\Delta e^{-zA_{\lambda\phi}}u\rVert_2^2\\
&\leq 16N^2(1+\lambda^4)e^{4k(1+\lambda^4)\lvert z\rvert}\lVert u\rVert_2^2\\
&\quad+12M_\Theta^2\lvert z\rvert^{-1}e^{4k(1+\lambda^4)\lvert z\rvert}\lVert u\rVert_2^2\\
&\leq 16(M_\Theta^2+N^2)\lvert z\rvert^{-1}e^{5k(1+\lambda^4)\lvert z\rvert}\lVert u\rVert_2^2.
\end{align*}
Combining inequality \eqref{eq:2perdelta} with the $L^{p_0'}-L^2$ estimate of Lemma \ref{lem:pdest}, we get
\[
 \lVert e^{\lambda\phi}\Delta e^{-zA}e^{-\lambda\phi}\rVert_{p_0'\to 2}
\leq 2M_{p_0'2}M\lvert z\rvert^{-1}e^{\omega(1+\lambda^4)\lvert z\rvert}
\]
for all $z\in\Sigma(\Theta/2)$, $\lambda>0$ and $\phi\in\E$.
Since the family $(\lvert z\rvert^{1/2}\Delta e^{-zA})_{z\in\Sigma(\Theta/2)}$ satisfies the scaling property \eqref{eq:sp}, it also satisfies $L^{p_0'}-L^2$ off-diagonal estimates by Propostion \ref{prop:offdiag}.

 Thus, $\Delta A^{-1/2}$ is of weak type $(p'_0,p'_0)$. Now, by the boundedness of $\Delta A^{-1/2}$ on $L^2(\z)$ and the Marcinkiewicz interpolation theorem, we conclude that  $\Delta A^{-1/2}\in\mathcal{L}(L^p(\z))$ for all $p\in(p'_0,2]$.
\end{proof}

Finally, we obtain the following corollary.
\begin{corollary}
The parabolic problem associated to $-A=-\Delta^2+\frac{c}{\lvert x\rvert^4}$, $c<C^*$
\[
\left\{\begin{aligned}
\partial_tu(t)&=-Au(t)\qquad\textrm{for}\ t\geq0,\\
u(0)&=f,
\end{aligned}
\right.
\]
admits a unique solution for each initial datum $f\in L^p(\z)$, $p\in[p'_0, p_0]$. Moreover, if $f\in L^p(\z)$ for $p\in(p'_0,2]$, then  the solution is in $W^{2,p}(\z)$.
\end{corollary}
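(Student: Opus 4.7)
The plan is to read the corollary as a direct consequence of the preceding two theorems combined with standard semigroup theory. For the existence and uniqueness statement, since the preceding theorem yields that $-A$ generates a bounded holomorphic $C_0$-semigroup $(e^{-tA_p})_{t\geq 0}$ on $L^p(\z)$ for each $p\in[p'_0,p_0]$, the function $u(t):=e^{-tA_p}f$ is the unique mild solution of the abstract Cauchy problem (see e.g.\ \cite[Section 3.1]{EMO}). Analyticity of the semigroup then automatically upgrades $u$ to a classical solution on $(0,\infty)$, with $u(t)\in D(A_p)$ and $\partial_t u(t)=-A_p u(t)$ for every $t>0$.

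For the $W^{2,p}$-regularity with $p\in(p'_0,2]$, I would use analyticity of $(e^{-tA_p})_{t\geq 0}$ to obtain, for each $t>0$, the quantitative bound
\[
\lVert A_p^{1/2}e^{-tA_p}f\rVert_p
\leq C\,t^{-1/2}\lVert f\rVert_p,
\]
and then write $\Delta e^{-tA_p}f=(\Delta A_p^{-1/2})\,A_p^{1/2}e^{-tA_p}f$, invoking Theorem \ref{thm:boundriesz} to control the first factor in $\cL(L^p(\z))$ and the above estimate for the second. This yields $\lVert\Delta u(t)\rVert_p\leq C't^{-1/2}\lVert f\rVert_p$. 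Combined with $u(t)\in L^p(\z)$, the classical Calder\'on-Zygmund estimate for the Laplacian on $\R^N$ (valid for all $1<p<\infty$) then produces $u(t)\in W^{2,p}(\z)$.

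The most delicate step is justifying the identity $\Delta e^{-tA_p}f=(\Delta A_p^{-1/2})\,A_p^{1/2}e^{-tA_p}f$ as an equality in $L^p(\z)$. On $L^2(\z)\cap L^p(\z)$ it is immediate from the selfadjointness of $A$ and the Borel functional calculus, and a density argument, together with the $L^p$-boundedness of $\Delta A^{-1/2}$ from Theorem \ref{thm:boundriesz} and the analyticity of the semigroup in $L^p(\z)$, transports the identity to all of $L^p(\z)$. This is a routine consistency check for the sectorial functional calculus of $A_p$, but it is the one point I would write out carefully, since everything else is either quoted from the earlier sections or is a classical elliptic-regularity fact.
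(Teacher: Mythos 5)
Your proposal is correct and follows exactly the route the paper intends: the corollary is stated without proof as an immediate consequence of the extrapolation theorem (well-posedness on $L^p$ for $p\in[p_0',p_0]$) and Theorem \ref{thm:boundriesz}, with the introduction already sketching the same mechanism you use, namely that boundedness of $\Delta A^{-1/2}$ on $L^p$ gives $D(A_p^{1/2})\subset W^{2,p}(\z)$ and analyticity places $e^{-tA_p}f$ in $D(A_p)\subset D(A_p^{1/2})$ for $t>0$. Your explicit treatment of the smoothing estimate, the factorization through $A_p^{1/2}$, and the Calder\'on--Zygmund step simply writes out the details the authors left implicit.
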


\section*{Acknowledgements}
One of the authors, F. G., would like to thank Professor El Maati Ouhabaz and Professor Abdelaziz Rhandi for their very helpful suggestions and discussions. She is also grateful to the Institut de Math\'ematiques of the University of Bordeaux for the kind hospitality during her visit.

\end{document}